\newtheorem{counter1}{not to be used as environment} [section]
	\newtheorem{The}[counter1]{Theorem}
	\newtheorem*{The*}{Theorem A}
	\newtheorem*{The**}{Theorem B}
	\newtheorem{Lem}[counter1]{Lemma}
	\newtheorem{Kor}[counter1]{Corollary}
	\theoremstyle{definition}
	\newtheorem{Bem}[counter1]{Remark}
            \newenvironment{Bew}{\begin{proof}[Proof]}{\end{proof}}
\begin{document}

\title{Block bootstrap for the empirical process of long-range dependent data}
\author{Johannes Tewes\footnote{Fakult\"{a}t f\"{u}r Mathematik, Ruhr-Universit\"{a}t Bochum, 44780 Bochum, Germany, Email address: Johannes.Tewes@rub.de; The authors's work has been supported by the Collaborative Research Center ``Statistical modeling of nonlinear dynamic processes'' (SFB 823) of the German Research Foundation (DFG).} }

\maketitle

\begin{abstract}
We consider long-range dependent data. It is shown that the bootstrapped empirical process of these data converges to a semi-degenerate limit. The random part of this limit is always Gaussian. Thus the bootstrap might fail when the original empirical process accomplishes a noncentral limit theorem.\\\\
\noindent {\sf\textbf{Keywords:}} long-range dependence, bootstrap, empirical process.
\end{abstract}

\section{Introduction}

Efron's \cite{Efr} bootstrap provides a strong nonparametric tool for approximating the distribution of many common statistics. For independent and identically distributed data Bickel and Freedman \cite{BiFr} and Singh \cite{Sin} have shown the asymptotic validity of this procedure. That means the bootstrapped statistics converges to the same limit distribution as the original statistic. The so-called blockwise bootstrap was first considered by K\"{u}nsch \cite{Kuen2} and applies to a large class of weakly dependent random variables. Especially for empirical processes this is of great interest. Let $(X_i)_{i \geq 0}$ be a stationary, weakly dependent time series. Then under some technical assumptions the normalized empirical process $n^{-1/2} \sum_{i=1}^n (1_{\{X_i \leq x \}} - F(x))$ converges to a zero-mean Gaussian process $K(x)$ with covariance kernel
\begin{align}
\begin{aligned}
E[K(x) K(y) ] = F(x \wedge y ) -F(x) F(y) + & \  \sum_{d=1}^{\infty} (P(X_0 \leq x, X_d \leq y) -F(x) F(y)) \\
+& \   \sum_{d=1}^{\infty} (P(X_0 \leq y, X_d \leq x) -F(x) F(y)). 
\end{aligned} \label{KovKernEinleitungWeakDep}
\end{align}
$F(x)$ is the distribution function of $X_i$ and is typically unknown. Even if it is known, (\ref{KovKernEinleitungWeakDep}) is of infinite dimension and cannot be computed. In the case of long-range dependence the situation is different. For several types of long-range dependence (see Dehling and Taqqu \cite{DeTa}, Ho and Hsing \cite{HoHs} and Wu \cite{Wu}) the empirical process converges weakly to $g(x) Z$, where $g$ is a deterministic function and $Z$ a possibly non Gaussian real valued random variable. So the limiting process is not as hard to treat as in the weakly dependent case. In the case of linear processes $g(x)$ is just the probability density and therefore can be estimated. However, in the case nonlinear transformations the function $g(x)$ is not known and hence a resampling method might be of interest. Lahiri \cite{Lah} considered the block bootstrap for the sample mean of long memory processes and showed that it is valid if and only if the non bootstrapped sample mean (properly normalized) converges to a normal limit. It turns out that the bootstrap for the empirical process behaves similar. It converges also to a semi-degenerate limit, but the random part is always normal. Nevertheless, even if the bootstrap technically fails, it has still some statistical applications. The reason is that the deterministic part of the limit, the function $g(x)$, is the same as for the original empirical process. Thus this function can always be estimated using the block bootstrap.

\section{Main results}

Consider the stationary Gaussian process $(X_i)_{i \geq 1}$ with
\begin{align*}
E X_i =0, \ \  EX_i^2 =1 \ \ \text{and} \ \ \rho (k) = E[ X_0 X_k ] = k^{-D} L(k)
\end{align*}
for $0 < D <1$ and a slowly varying function $L$. We will not observe the $X_i$ themselves but a (possibly non-linear) transformation of them, namely $Y_i =G(X_i)$. The  empirical process of these random variables is
\begin{align*}
W_n(x) = \sum_{i=1}^n (1_{\{Y_i \leq x \}} - F(x) ).
\end{align*}
Its asymptotic behavior depends on the so-called Hermite rank, defined by
\begin{align*}
m =  \min \left \{ q >0  \ \vert  \ E [1_{\{ G(X_1) \leq x \} } H_q(X_1)] \not =0 \text{ for some } x \right \}.
\end{align*}
Together with the parameter $D$ it determines the dependence structure of $ \{ 1_{\{ G(X_i) \leq x \} }, \ x \in \mathds R \}_{i \geq 1}$. The correct normalization for the empirical process is
\begin{align*}
d_n \sim n^{H} L^{m/2}(n),
\end{align*}
where $H = 1-mD/2$ is called Hurst exponent. Dehling and Taqqu \cite{DeTa} considered the more complicated sequential empirical process and their result reads as follows. 

\begin{The*} [Dehling, Taqqu] \label{SeqEmpProHypo}
Let the class of functions $\{1_{ \{ G(\cdot) \leq x \}  } - F(t) , \ - \infty < x < \infty\}$ have Hermite rank m and let $0 < D < 1/m$. Then
\begin{align}
 d_n^{-1} W_{\lfloor nt \rfloor } (x) \xrightarrow{\mathcal D}  \frac {J_m(x)} {m !} Z_m(t), \label{SeqKonvHypo}
\end{align}
where the convergence takes place in $D([0,1] \times [- \infty,\infty])$, equipped with the uniform topology. 
\end{The*}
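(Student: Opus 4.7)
The plan is to decompose the centered indicators into Hermite series, extract the leading term of Hermite rank $m$, and show that the remainder is uniformly negligible at scale $d_n$. Since each $X_i$ is standard Gaussian, I would write
\begin{align*}
1_{\{G(X_i) \leq x\}} - F(x) = \sum_{q=m}^{\infty} \frac{J_q(x)}{q!} H_q(X_i), \qquad J_q(x) = E\!\left[1_{\{G(X_1)\leq x\}} H_q(X_1)\right],
\end{align*}
where the sum starts at $q=m$ by the Hermite-rank assumption. Summation over $i \leq \lfloor nt \rfloor$ yields the decomposition $W_{\lfloor nt \rfloor}(x) = (J_m(x)/m!)\, S_n(t) + R_n(x,t)$ with $S_n(t) = \sum_{i=1}^{\lfloor nt \rfloor} H_m(X_i)$, and the whole task reduces to handling $S_n$ and $R_n$ separately.

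For the leading term, Taqqu's non-central limit theorem gives $d_n^{-1} S_n(\cdot) \xrightarrow{\mathcal D} Z_m(\cdot)$ in $D([0,1])$. Since $J_m(x)/m!$ is a bounded continuous deterministic function of $x$ (Cauchy--Schwarz gives $|J_m(x)| \leq \sqrt{m!}$, and continuity follows by dominated convergence), this immediately furnishes convergence of the leading part to $(J_m(x)/m!)\, Z_m(t)$ jointly in $(x,t)$ for any fixed finite collection of $x$-values, and tightness in $(x,t)$ reduces to the known tightness of $Z_m$.

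The heart of the proof is the uniform negligibility
\begin{align*}
\sup_{x \in \mathds R,\, t \in [0,1]} d_n^{-1} |R_n(x,t)| \xrightarrow{P} 0.
\end{align*}
Pointwise, orthogonality of the Hermite components together with the diagram/Mehler identity $\operatorname{Cov}(H_q(X_0),H_q(X_j)) = q!\,\rho(j)^q$ gives the variance asymptotics $\operatorname{Var}(\sum_{i=1}^n H_q(X_i)) = O(q!\, n^{2-qD} L(n)^q)$ when $qD<1$ and $O(q!\, n)$ when $qD>1$. Weighted by $J_q(x)^2/(q!)^2$ and summed over $q \geq m+1$, together with the Parseval bound $\sum_{q \geq 1} J_q(x)^2/q! = F(x)(1-F(x)) \leq 1/4$, this yields $E[R_n(x,t)^2] = o(d_n^2)$ uniformly in $x$ and $t$, since the surviving orders $q \geq m+1$ lose a factor $n^{-(q-m)D} L(n)^{q-m}$ or $n^{mD-1}/L(n)^m$ against $d_n^2 \sim n^{2-mD} L(n)^m$. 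To upgrade this to uniform control in $x$ I would pick a grid $x_0 < \cdots < x_N$ with $F$-spacing $\varepsilon$, apply monotonicity of $x \mapsto \sum_i 1_{\{Y_i \leq x\}}$ to bound $\sup_x |R_n(x,t)|$ by the grid maximum plus a deterministic $O(n\varepsilon)$ oscillation, and balance the terms by letting $\varepsilon$ shrink slightly faster than $d_n/n$.

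The main obstacle is precisely this uniform-in-$x$ argument for the remainder. One cannot invoke standard long-memory empirical-process machinery directly, because the non-classical normalization $d_n \sim n^{1-mD/2} L(n)^{m/2}$ sits between $\sqrt n$ and $n$, so both the chaining grid and the union bound must be tuned to $d_n$; and the variance contributions have to be summed over every $q \geq m+1$ with the factorial-weighted coefficients $J_q(x)^2/q!$ handled uniformly in $x$, which is where the Parseval identity is indispensable. Once the remainder is controlled, the identification of the limit $(J_m(x)/m!)\, Z_m(t)$ and joint weak convergence in the uniform topology on $D([0,1]\times[-\infty,\infty])$ follow by combining Taqqu's functional limit for $S_n$ with the deterministic continuity of $J_m$.
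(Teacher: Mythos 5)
Your overall strategy is exactly the one behind the Dehling--Taqqu result as the paper presents it: expand $1_{\{G(X_i)\leq x\}}-F(x)$ in Hermite polynomials, peel off the rank-$m$ term $(J_m(x)/m!)H_m(X_i)$, apply Taqqu's non-central limit theorem to $\sum_{i\leq \lfloor nt\rfloor}H_m(X_i)$, and prove a uniform reduction principle for the remainder $S_n$ defined in (\ref{ApproxEmpProRot}). Your pointwise variance computation for the remainder (orthogonality, $\operatorname{Cov}(H_q(X_0),H_q(X_j))=q!\,\rho(j)^q$, the Parseval bound $\sum_q J_q^2(x)/q!\leq F(x)(1-F(x))$) is also correct and is precisely what yields Lemma \ref{PreRot}.

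The gap is in your uniformization step. You propose a single grid with $F$-spacing $\varepsilon$, monotonicity between grid points, and a union bound over the grid maximum of $|R_n(x_i,t)|$. To absorb the $O(n\varepsilon)$ oscillation at scale $d_n$ you must take $\varepsilon$ of order $d_n/n\sim n^{-mD/2}L^{m/2}(n)$, hence $N=1/\varepsilon\gtrsim n^{mD/2}$ grid points. A pointwise Chebyshev bound $P(|R_n(x_i,t)|>\delta d_n)\leq Cn^{-\gamma}\delta^{-2}$ summed over the grid then gives $Nn^{-\gamma}$, and this requires $\gamma>mD/2$. But the variance gain of the remainder over $d_n^2$ is only of order $n^{-D}$ (from the $q=m+1$ chaos) or $n^{-(1-mD)}$ (from the short-memory chaoses), so $\gamma<\min\{D,1-mD\}$; already for $m\geq 2$ one has $D\leq mD/2$, and the single-grid union bound cannot close. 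This is exactly why Dehling and Taqqu do not use one grid but a dyadic chaining scheme: the increment bound $E|S_n(x,y)|^2\leq Cn^{-\gamma}(F(y)-F(x))$ of Lemma \ref{PreRot} carries the factor $F(y)-F(x)$, so at refinement level $k$ the union bound over $2^k$ increments telescopes to $Cn^{-\gamma}(k+3)^4$ \emph{independently of the number of grid points}, and summing over $K\asymp\log_2(n/d_n)$ levels costs only a power of $\log n$. This is the chaining displayed in the paper's proof of Lemma \ref{WeakRedPrinBoots} (the bootstrap analogue), including the partition built from $\Lambda(x)=F(x)+\int_{\{G(s)\leq x\}}|H_m(s)|\phi(s)\,ds/m!$ rather than from $F$ alone, which is needed because the between-grid oscillation also contains the term $J_m(x_i,x)\sum_j H_m(X_j)$ and the $J_m$-increments must be controlled by the same partition. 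Without replacing your single-grid argument by this multi-scale chaining (or an equivalent device exploiting the $F$-increment structure of the variance bound), the proof of the uniform negligibility of the remainder does not go through.
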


As a direct consequence 
\begin{align}
 d_n^{-1} W_n(x) \xrightarrow{\mathcal D}  \frac {J_m(x)} {m !} Z_m(1) \label{NonSeqKonvHypo}
\end{align}
in the space $D[-\infty, \infty ]$.
$Z_m(1)$ is normalized and standardized and it is Gaussian if and only if $m=1$. $J_m(x)$ is a deterministic function defined by 
\begin{align*}
J_m(x) = E[1_{\{G(X_1) \leq x \}} H_m(X_1)].
\end{align*}
The limit is therefore sometimes called semi-degenerate. $J_m$ depends on the transformation $G$ and to the best of our knowledge there exists no procedure to estimate it. \\
In this paper we will discuss the block bootstrap as possible solution. For a sample $Y_1,\dots, Y_n$ choose a block length $l(n)$ and consider the  $n-l+1$ blocks $I_1, \dots , I_{n-l+1}$, defined by
\begin{align*}
I_j = (Y_{j}, \dots , Y_{j+l-1}) \ \ \ \ j=1, \dots ,n-l+1.
\end{align*}
Then we choose randomly with replacement $p= p(n)$ blocks, so that the bootstrap sample $Y_1^{\ast}, \dots , Y_{pl}^{\ast}$ satisfies
\begin{align*}
 P \left ( (Y_{(j-1)l +1}^{\ast}, \dots , Y_{jl}^{\ast}) = I_i \right ) = \frac{1}{n-l+1} \ \ \text{for} \ \ j = 1, \dots , p, \ \  i =1, \dots ,n-l+1.
 \end{align*}
 The common choice for the number of blocks is $p= \lfloor n /l \rfloor$, however, this is not necessary for the proof. Further denote the blocks of indices by
 \begin{align*}
 B_i = (i, \dots , i+l-1)  \ \ \ \ i=1, \dots ,n-l+1.
\end{align*}
 This procedure is called moving block bootstrap (MBB), see K\"{u}nsch \cite{Kuen2}. In the case of long-range dependence it has been applied to subordinated gaussian processes by Lahiri \cite{Lah} and to linear sequences by Kim and Nordman \cite{KiNo}. Both consider the bootstrap of the sample mean. \\
 In what follows $E^{\ast}$ will denote conditional expectation given the sample $Y_1, \dots , Y_n$. Analogously $P^{\ast}$ denotes conditional probability and $\xrightarrow{\mathcal D}_{\ast}$ weak convergence with respect to $P^{\ast}$.
 
 \begin{The**} [Lahiri] \label{BootsSampleMean}
 Let $l=O(n^{1- \epsilon})$ for some $0 < \epsilon <1$ and $p^{-1}+l^{-1} =o(1)$. Then 
 \begin{align*}
 \frac 1 { p^{1/2} d_l} \sum_{i=1}^{pl} (Y_i^{\ast} -  E^{\ast} Y_i^{\ast} ) \xrightarrow{\mathcal D}_{\ast} \mathcal N (0,\sigma_m^2) \ \ \text{in probability,}
 \end{align*}
 where $\sigma_m = E[G(X_1)H_m(X_1)]/m!$.
 \end{The**}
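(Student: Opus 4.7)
The plan is to exploit the block structure of the bootstrap sample and reduce the problem to a classical CLT for conditionally i.i.d.\ summands. Set $\widetilde T_i := \sum_{j \in B_i} Y_j$ and, for $k=1,\dots,p$, $S_k^{\ast} := \sum_{j=(k-1)l+1}^{kl} Y_j^{\ast}$. The bootstrap construction makes $S_1^{\ast},\dots,S_p^{\ast}$ i.i.d.\ under $P^{\ast}$, each uniform on $\{\widetilde T_1,\dots,\widetilde T_{n-l+1}\}$, and
\begin{align*}
\frac{1}{p^{1/2} d_l} \sum_{i=1}^{pl}(Y_i^{\ast}-E^{\ast}Y_i^{\ast}) = \frac{1}{p^{1/2} d_l} \sum_{k=1}^p (S_k^{\ast}-E^{\ast}S_k^{\ast}).
\end{align*}
I would then apply a conditional Lindeberg--Feller CLT in probability; this reduces the task to verifying (a) $d_l^{-2}\operatorname{Var}^{\ast}(S_1^{\ast}) \to \sigma_m^2$ in probability and (b) a Lyapunov-type truncation condition in probability.

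For (a), write $\operatorname{Var}^{\ast}(S_1^{\ast}) = (n-l+1)^{-1}\sum_i \widetilde T_i^2 - \overline T^{\,2}$, where $\overline T := (n-l+1)^{-1}\sum_i \widetilde T_i$. The centering is negligible after normalization: $\overline T$ equals $(l/n)\sum_{j=1}^n Y_j$ up to boundary corrections, and $\sum_j(Y_j - EY_j)$ is of order $d_n$ in probability, so $\overline T/d_l$ is of order $(l/n)^{1-H}=o(1)$ under $l=O(n^{1-\epsilon})$. For the leading term, the expectation $E\widetilde T_1^2/d_l^2$ converges to $\sigma_m^2$ by applying the Dehling--Taqqu noncentral limit theorem to $G$ itself (Hermite rank $m$) together with uniform integrability, which I would obtain from a $(2+\delta)$-moment bound $E|\widetilde T_l|^{2+\delta}\le C d_l^{2+\delta}$. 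Convergence in probability then reduces to concentration of the empirical average $(n-l+1)^{-1}d_l^{-2}\sum_i \widetilde T_i^2$ around its mean. For (b) I would use the Lyapunov form with exponent $2+\delta$: the relevant quotient becomes $p^{-\delta/2}$ times $(n-l+1)^{-1}d_l^{-(2+\delta)}\sum_i|\widetilde T_i - \overline T|^{2+\delta}$, which is bounded in probability by the same moment estimate and vanishes since $p\to\infty$.

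The main obstacle will be the concentration step in (a). The block sums $\widetilde T_i$ overlap almost completely and inherit long-range dependence from $(X_i)$, so the variables $\widetilde T_i^2$ have slowly decaying covariances. Establishing $(n-l+1)^{-1}d_l^{-2}\sum_i \widetilde T_i^2 \to \sigma_m^2$ in probability therefore calls for a careful covariance estimate based on the Hermite expansion of $(Y_i - EY_i)$ and a Taqqu-type diagram bound; the assumption $l = O(n^{1-\epsilon})$ should provide the polynomial slack between $l$ and $n$ needed to offset the slow dependence decay. Once the variance and Lyapunov conditions are in hand, the conclusion follows from the conditional CLT via characteristic functions, noting that the limit is always Gaussian precisely because the blocks are i.i.d.\ under $P^{\ast}$, regardless of whether $m=1$ or $m\ge 2$.
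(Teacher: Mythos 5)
First, note that the paper does not prove this statement: Theorem~B is imported verbatim from Lahiri \cite{Lah}, and the present paper only uses it (and a second-moment bound from its proof) as an external ingredient. So the comparison can only be against Lahiri's argument, whose architecture your sketch does reproduce correctly: conditionally on the sample the block sums $S_1^{\ast},\dots,S_p^{\ast}$ are i.i.d., so the problem reduces to a conditional Lindeberg--Feller CLT, and the whole content of the theorem sits in the variance condition (a) and the Lyapunov/Lindeberg condition (b). Your observation that Gaussianity of the limit is forced by the conditional independence of the blocks, irrespective of $m$, is exactly the right explanation for why the bootstrap ``loses'' the non-normal limit when $m\ge 2$.

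That said, there are two genuine gaps. The first you name yourself but do not close: the concentration of $(n-l+1)^{-1}d_l^{-2}\sum_i \widetilde T_i^2$ around $\sigma_m^2$ is not a routine step --- the $\widetilde T_i$ are overlapping block sums of a long-range dependent sequence, the squares $\widetilde T_i^2$ live in Wiener chaoses of orders $0,2,\dots,2m$, and the slowly decaying chaos components must be controlled by an explicit covariance/diagram computation exploiting $l=O(n^{1-\epsilon})$. This is precisely the substance of Lahiri's Lemma~3.1 (quoted in the present paper as Lemma \ref{PreBlau}); announcing that it ``calls for a careful covariance estimate'' is not a proof of it. The second gap is the route through uniform integrability and the bound $E|\widetilde T_l|^{2+\delta}\le C\,d_l^{2+\delta}$: for a general subordinated functional $G\in L^2(\phi)$ such a $(2+\delta)$-moment bound is not available without additional moment hypotheses on $G(X_1)$, so as written both your uniform-integrability argument for $E\widetilde T_1^2/d_l^2\to\sigma_m^2$ and your Lyapunov verification rest on an unjustified estimate. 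The standard repair --- and the one consistent with how this paper treats the empirical process --- is to apply the $L^2$ reduction principle first: replace $\sum_{j\in B_i}(Y_j-EY_j)$ by $\sigma_m\sum_{j\in B_i}H_m(X_j)$ at the cost of a remainder that is $o(d_l)$ in $L^2$ uniformly over blocks, and only then invoke hypercontractivity within the fixed $m$-th chaos to obtain all required higher moments; also, $E\widetilde T_1^2/d_l^2\to\sigma_m^2$ is most easily obtained by the direct covariance computation $\operatorname{Cov}(G(X_0),G(X_k))\sim \sigma_m^2\, m!\,\rho(k)^m$ rather than via the noncentral limit theorem plus uniform integrability.
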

 
 Two things are remarkable. The first is that the bootstrap destroys somehow the dependence of the random variables, thus a weaker normalization is needed. The second is that the limit is always normal. However, for Hermite ranks larger than $1$ the partial sum of long-range dependent data converges towards a nonnormal limit, see Taqqu \cite{Taq3} and Dobrushin and Major \cite{DoMa}. Hence the bootstrap fails in this case. The sampling window method does not suffer from this issue (see Hall, Jing and Lahiri \cite{HaJiLa}) and has become more popular for statistical inference on long memory time series (see Lahiri and Nordman \cite{LaNo} and Ho et. al. \cite{HoWeWuZh}).
 \\\\
 Now consider the bootstrapped empirical process
 \begin{align*}
  \frac 1 { p^{1/2} d_l} \sum_{i=1}^{pl} (1_{\{ Y_i^{\ast} \leq x \}} - E^{\ast}[1_{\{ Y_i^{\ast} \leq x \}} ]).
  \end{align*}
 For weakly dependent data this was considered by B\"{u}hlmann \cite{Buel}, Naik-Nimbalakar and Rajarshi \cite{NaRa} and Peligrad \cite{Pel}. The main theorem of this paper reads as follows. 
 
 \begin{The} \label{ConvergenceEmpProBoots}
Let the class of functions $\{1_{ \{ G(\cdot) \leq x \}  } - F(t) , \ - \infty < x < \infty\}$ have Hermite rank m and let $0 < D < 1/m$. Let further the block length satisfy $l=O(n^{1- \epsilon})$ for some $0 < \epsilon <1$ and $p^{-1}+l^{-1} =o(1)$. Then 
\begin{align*}
 \frac 1 { p^{1/2} d_l} \sum_{i=1}^{pl} (1_{\{ Y_i^{\ast} \leq x \}} - E^{\ast}[1_{\{ Y_i^{\ast} \leq x \}} ]) \xrightarrow{\mathcal D}_{\ast} \frac {J_m(x)} {m !} Z \ \ \text{in probability},
\end{align*}
where the convergence takes place in $D([- \infty,\infty])$, equipped with the uniform topology. $J_m$ is defined as above and $Z$ is standard normal distributed.  
\end{The}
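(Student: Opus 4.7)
I would split the proof into finite-dimensional convergence and tightness in the uniform topology. The structural point is that, conditionally on $Y_1,\dots,Y_n$, the $p$ resampled blocks are i.i.d.; writing $S_i^{\ast}(x)=\sum_{j=(i-1)l+1}^{il} 1_{\{Y_j^{\ast}\leq x\}}$, the bootstrapped empirical process
\[
 W_n^{\ast}(x)=\frac{1}{p^{1/2}d_l}\sum_{i=1}^{p}\bigl(S_i^{\ast}(x)-E^{\ast}S_i^{\ast}(x)\bigr)
\]
is, given the data, a normalized sum of $p$ i.i.d.\ centered random elements of $D([-\infty,\infty])$. This is the decisive simplification that the block bootstrap buys us: long-range dependence only has to be controlled inside a single block, while between blocks we work in an ordinary i.i.d.\ framework.

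For the finite-dimensional distributions I would apply the Cram\'er--Wold device. Given $x_1,\dots,x_k$ and $a_1,\dots,a_k\in\mathds R$, the linear combination $\sum_j a_j W_n^{\ast}(x_j)$ equals the bootstrapped sample mean of $\phi(Y_i)$ with $\phi(t)=\sum_j a_j 1_{\{t\leq x_j\}}$. Since $\phi\circ G$ has Hermite rank at least $m$ and its $m$-th Hermite coefficient is $\sum_j a_j J_m(x_j)$, Lahiri's Theorem~B (whose proof transfers from $Y_i=G(X_i)$ to $\phi(Y_i)$ verbatim; the case of rank strictly greater than $m$ just yields a vanishing limiting variance, consistently with $\sum_j a_j J_m(x_j)=0$) produces a Gaussian limit with variance $\bigl(\sum_j a_j J_m(x_j)/m!\bigr)^2$. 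This is exactly the variance of $\sum_j a_j J_m(x_j)/m!\cdot Z$, so the finite-dimensional distributions converge to the announced semi-degenerate limit.

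For tightness in the uniform topology I would use the monotonicity of the indicator class together with the conditional i.i.d.\ structure to reduce matters to a suitable moment bound on the bootstrap increments $W_n^{\ast}(y)-W_n^{\ast}(x)$. Across the $p$ independent blocks the dominant fourth-moment term is the squared single-block variance; the latter equals the empirical average $\frac{1}{n-l+1}\sum_j T_j(x,y)^2$ of block sums $T_j(x,y)=\sum_{k=j}^{j+l-1}(1_{\{x<Y_k\leq y\}}-(F(y)-F(x)))$, whose expected value, by the second-moment analysis behind the Dehling--Taqqu theorem, is asymptotic to $d_l^2(J_m(y)-J_m(x))^2/m!^2$ uniformly on a compact interval. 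After dividing by $p\,d_l^4$ this gives an increment modulus controlled by $|J_m(y)-J_m(x)|$, i.e.\ by a continuous modulus. The \textbf{main obstacle} is making this bound uniform in $x,y$ and transferring it from population second moments to conditional bootstrap expectations: the Hermite expansion of $1_{\{G(\cdot)\leq x\}}$ has infinitely many terms whose contributions must be controlled globally, the slowly varying $L(l)$ in $d_l$ requires uniform handling, and the tails $|x|\to\infty$ must be truncated without destroying the LRD variance estimate. A natural route is to truncate the Hermite expansion at some large order, prove tightness of the truncated process via a bracketing argument on the finitely many leading Hermite components, and control the sup-norm of the Hermite tail separately by a Cauchy--Schwarz bound using the full LRD second moment.
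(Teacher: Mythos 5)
Your finite-dimensional step is sound and is a genuinely different route from the paper: via Cram\'er--Wold and Lahiri's Theorem B applied to $\phi\circ G$ with $\phi(t)=\sum_j a_j 1_{\{t\le x_j\}}$, whose $m$-th Hermite coefficient is $\sum_j a_j J_m(x_j)$, you do obtain the correct semi-degenerate fidi limit (the degenerate case $\sum_j a_j J_m(x_j)=0$ requires redoing the conditional-variance computation behind Theorem B rather than citing its statement, but that is routine). The paper, by contrast, never separates fidis from tightness: it proves a bootstrap analogue of the Dehling--Taqqu uniform reduction principle (Lemma \ref{WeakRedPrinBoots}), showing $\sup_x\lvert S^{\ast}_{n,l}(x)\rvert\to 0$ in conditional probability, where $S^{\ast}_{n,l}$ subtracts $J_m(x)/m!$ times the centered bootstrapped Hermite sum; the process-level claim then follows from Theorem B and the continuous mapping theorem.

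The gap is your tightness argument, which is the heart of the theorem and which you yourself flag as the ``main obstacle'' without resolving. Two concrete problems. First, the conditional fourth moment of an increment over the $p$ i.i.d.\ blocks contains the diagonal term $E^{\ast}\xi_1^4/(p\,d_l^4)$; bounding $E^{\ast}\xi_1^4\le C l^2 E^{\ast}\xi_1^2$ leaves a prefactor $l^2/(p\,d_l^2)\sim l^{mD}/(p\,L^m(l))$, which is not bounded under the stated hypotheses ($p^{-1}+l^{-1}=o(1)$ allows $p$ to grow arbitrarily slowly), so the fourth-moment route does not close without extra assumptions or a genuinely sharper fourth-moment estimate for LRD block sums. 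Second, an increment modulus ``controlled by $\lvert J_m(y)-J_m(x)\rvert$'' is insufficient for any chaining or bracketing argument: $J_m$ need not be strictly increasing in $\Lambda$, so there are intervals with $J_m(x,y)=0$ but $F(x,y)>0$ on which such a bound gives no control. The usable second-moment bound (Lemma \ref{PreRot}) is $C\bigl(J_m(x,y)^2/(m!)^2+l^{-\gamma}F(x,y)\bigr)d_l^2$, and the vanishing term $l^{-\gamma}F(x,y)$ is indispensable; exploiting it forces the partition mesh to depend on $l$, which is precisely the chaining carried out in the paper's proof of Lemma \ref{WeakRedPrinBoots} using only conditional second moments, via the identity $E^{\ast}[S^{\ast}_{n,l}(x)]^2=(n-l+1)^{-1}\sum_i S_{l,i}^2(x)$ together with Lemmas \ref{PreRot}, \ref{PreBlau} and \ref{PreGelb}. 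Your proposed truncation-plus-bracketing repair is plausible in outline, but as written the uniform convergence is not established.
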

Similar to the empirical process of LRD data (see (\ref{NonSeqKonvHypo})) the bootstrapped version has a semi-degenerate limit. However, the normalization in Theorem \ref{ConvergenceEmpProBoots} is weaker than in (\ref{NonSeqKonvHypo}) and the random part of the limit is always Gaussian, just as for the bootstrapped sample mean.

\begin{Bem}
The definition of the convergence obtained in Theorem \ref{ConvergenceEmpProBoots} is not straightforward. We say that a random process $Z_n^{\ast}(x)$ converges in probability in distribution if every subsequence $(n_k)_k$ has another subsequence$(n_{k_l})_l$, such that $Z_{n_{k_l}}^{\ast}(x)$ converges almost surely in distribution, see Naik-Nimbalakar and Rajarshi \cite{NaRa}.
\end{Bem}

Comparing the asymptotic distributions in Theorems \ref{SeqEmpProHypo} and \ref{ConvergenceEmpProBoots}, one might conclude that the bootstrap fails if $m >1$. However, the function $J_m(x)$ can still be estimated (up to its sign). \\
Consider $A$ bootstrap iteration and denote by
\begin{align*}
X_{1,a}^{\ast}, \dots , X_{pl,a}^{\ast} \ \ \ \ a \in \{1, \dots , A\}
\end{align*}
the $a$-th bootstrap sample. Denote further the empirical process of the $a$-th sample by $W_{n,a}^{\ast}(x)$. Then our estimator for $J_m(x)$ is given by
\begin{align*}
\hat J_{n,A,m}(x) = m!  \left (\frac1 A \sum_{a=1}^A (W_{n,a}^{\ast}(x))^2 \right )^{1/2}.
\end{align*}

\begin{Kor}
Let the conditions of Theorem \ref{ConvergenceEmpProBoots} hold. Then
\begin{align*}
\lim_{A \to \infty} \lim_{n \to \infty} P \left ( \sup_{x \in \mathds R} \left \lvert \lvert \hat J_{n,A,m} (x) \rvert - \lvert J_m(x) \rvert \right \rvert > \epsilon \right ) = 0, 
\end{align*}
for all $\epsilon >0$.
\end{Kor}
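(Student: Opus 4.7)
The plan is a two-stage limit: first pass $n\to\infty$ with the number of bootstrap replications $A$ fixed, using Theorem \ref{ConvergenceEmpProBoots}, and then send $A\to\infty$ via an elementary law of large numbers for $\chi_1^2$ variates.

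First I would upgrade Theorem \ref{ConvergenceEmpProBoots} to joint convergence of the $A$ replications. Conditionally on the sample $Y_1,\dots,Y_n$, the bootstrap empirical processes $W_{n,1}^*,\dots,W_{n,A}^*$ are i.i.d.\ elements of $D[-\infty,\infty]$. Passing to a common subsequence along which each of the $A$ marginal conditional convergences from Theorem \ref{ConvergenceEmpProBoots} holds almost surely (as in the subsequence characterisation given in the remark) and then exploiting conditional independence yields
\begin{align*}
(W_{n,a}^*)_{a=1}^A \xrightarrow{\mathcal{D}_*} \Bigl(\tfrac{J_m}{m!}Z_a\Bigr)_{a=1}^A \quad\text{in probability,}
\end{align*}
with $Z_1,\dots,Z_A$ i.i.d.\ standard normal, the convergence taking place in $(D[-\infty,\infty])^A$ with the uniform topology.

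Next I would apply the continuous mapping theorem twice. The functional $(f_1,\dots,f_A)\mapsto m!\,\bigl(A^{-1}\sum_a f_a^2\bigr)^{1/2}$ is sup-norm continuous, so
\begin{align*}
\hat J_{n,A,m}(\cdot) \xrightarrow{\mathcal{D}_*} |J_m(\cdot)|\,\Bigl(\tfrac{1}{A}\sum_{a=1}^A Z_a^2\Bigr)^{1/2} \quad\text{in probability,}
\end{align*}
and a second application, to $f\mapsto\sup_x\bigl||f(x)|-|J_m(x)|\bigr|$, gives
\begin{align*}
\sup_{x\in\mathds R}\bigl||\hat J_{n,A,m}(x)|-|J_m(x)|\bigr| \xrightarrow{\mathcal{D}_*} \Bigl(\sup_{x\in\mathds R}|J_m(x)|\Bigr)\cdot\Bigl|\bigl(\tfrac{1}{A}\sum_{a=1}^A Z_a^2\bigr)^{1/2}-1\Bigr|.
\end{align*}
Because the right-hand limit is independent of the original data, convergence in probability in distribution reduces to ordinary weak convergence, so the inner $\lim_{n\to\infty}$ in the corollary equals the probability that the right-hand side above exceeds $\epsilon$.

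Finally, $|J_m(x)|\le E|H_m(X_1)|<\infty$ uniformly in $x$, and $A^{-1}\sum_a Z_a^2\to 1$ almost surely by the strong law, so the right-hand side tends to zero in probability as $A\to\infty$, completing the iterated limit. The only delicate point is the first step: joint convergence under the in-probability-in-distribution mode of Theorem \ref{ConvergenceEmpProBoots} requires the subsequence device before conditional independence can be invoked; after that, the argument is just continuous mapping and an elementary LLN.
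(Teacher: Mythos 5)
The paper states this corollary without any proof, so there is nothing to compare your argument against line by line; judged on its own terms, your proof is correct and is almost certainly the intended one. The two-stage structure (fix $A$, let $n\to\infty$ via Theorem \ref{ConvergenceEmpProBoots} and the continuous mapping theorem, then let $A\to\infty$ by the law of large numbers for $\frac1A\sum_a Z_a^2$) works, and you correctly isolate the one genuinely delicate step: Theorem \ref{ConvergenceEmpProBoots} concerns a single bootstrap replication, and upgrading it to joint convergence of $(W^{\ast}_{n,1},\dots,W^{\ast}_{n,A})$ requires passing to a subsequence along which the conditional laws converge almost surely and then using that the $A$ replications are conditionally i.i.d., so that the joint conditional law is the $A$-fold product and converges to the product of the limit laws. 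Two small points are worth making explicit. First, for the corollary to be true at all, $W^{\ast}_{n,a}$ must be the \emph{normalized} bootstrap process $\frac{1}{p^{1/2}d_l}\sum_{i=1}^{pl}\bigl(1_{\{Y^{\ast}_{i,a}\le x\}}-E^{\ast}[1_{\{Y^{\ast}_{i,a}\le x\}}]\bigr)$ rather than an unnormalized sum as in the definition of $W_n$; you assume this implicitly, and it is what the paper must mean. Second, identifying $\lim_{n\to\infty}P(\,\cdot>\epsilon)$ with $P\bigl(\sup_x|J_m(x)|\,\bigl|(\frac1A\sum_aZ_a^2)^{1/2}-1\bigr|>\epsilon\bigr)$ via the portmanteau theorem requires $\epsilon$ to be a continuity point of the limit law; this holds because $(\frac1A\sum_aZ_a^2)^{1/2}$ has a density and $\sup_x|J_m(x)|$ is a finite positive constant (positive by the definition of the Hermite rank, finite since $|J_m(x)|\le E|H_m(X_1)|\le\sqrt{m!}$). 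With these remarks added, your argument is complete and fills a gap the paper leaves open.
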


The main part of the proof of Theorem A is a reduction principle and this technique has become popular for empirical processes of LRD data ever sine. Define
\begin{align}
S_n(x) = \frac 1 {d_n} \sum_{i=1}^n \left (1_{\{Y_i \leq x \}} - F(x) - J_m/m!(x) H_m(X_i) \right ). \label{ApproxEmpProRot}
\end{align}
Dehling and Taqqu \cite{DeTa} have shown that $S_n$ converges uniformly and in probability towards zero. It is our aim to proof Theorem \ref{ConvergenceEmpProBoots} in a similar way. To this end consider the bootstrapped version of (\ref{ApproxEmpProRot})
\begin{align}
S_{n,l}^{\ast}(x) = \frac 1 {d_l p^{1/2}} \sum_{i=1}^{pl} \left  (1_{\{Y^{\ast}_i \leq x \}} - \tilde F_{n,l}(x) - J_m(x)/m! \left ( H_m(X_i^{\ast}) - \tilde \mu_{n,l}(H_m) \right ) \right ),
\end{align}
where 
\begin{align}
\tilde \mu_{n,l}(H_m) = l^{-1}  E^{\ast} \left [ \sum_{j \in B_1} H_m(X_j^{\ast})  \right ] \ \ \ \ \text{and} \ \ \ \  \tilde F_{n,l}(x) = l^{-1}  E^{\ast} \left [ \sum_{j \in B_1} 1_{\{Y_j^{\ast} \leq x \}} \right  ]. \label{DefinitionAltMeanAltECDF}
\end{align}

\begin{Lem}[Bootstrap uniform weak reduction principle] \label{WeakRedPrinBoots}
Let the conditions of Theorem \ref{ConvergenceEmpProBoots} hold. Then
\begin{align*}
P^{\ast} \left ( \sup_{ - \infty \leq x \leq \infty} \lvert S_{n,l}^{\ast}(x) \rvert > \epsilon \right ) \rightarrow 0  \ \ \text{in probability} 
\end{align*}
for all $\epsilon >0$ and $n \to \infty$.
\end{Lem}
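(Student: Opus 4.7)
The plan is to exploit the conditional $P^{\ast}$-independence of the bootstrap blocks and reduce uniform convergence to a second-moment bound controlled by Dehling and Taqqu's original reduction principle. Let $I_1,\ldots,I_p$ be the iid $P^{\ast}$-uniform starting indices on $\{1,\ldots,n-l+1\}$ and set
\[
A_i(x):=\sum_{j=0}^{l-1}\!\left(1_{\{Y_{i+j}\le x\}}-\tfrac{J_m(x)}{m!}H_m(X_{i+j})\right),\qquad \bar A(x):=\tfrac{1}{n-l+1}\sum_{i=1}^{n-l+1}A_i(x).
\]
A short computation from (\ref{DefinitionAltMeanAltECDF}) gives $S_{n,l}^{\ast}(x)=(d_l\sqrt p)^{-1}\sum_{k=1}^p T_k^{(l)}(x)$ with $T_k^{(l)}(x):=A_{I_k}(x)-\bar A(x)$, and these summands are iid under $P^{\ast}$ with mean zero. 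This is the bootstrap analogue of the reduction process (\ref{ApproxEmpProRot}).

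Next I would prove a pointwise variance bound. Writing $S_l^{(i)}(x):=d_l^{-1}[A_i(x)-lF(x)]$, which by stationarity is a copy of the Dehling--Taqqu reduction process on a block of length $l$, conditional independence yields
\[
E^{\ast}[S_{n,l}^{\ast}(x)^2]=\tfrac{1}{d_l^2(n-l+1)}\sum_i(A_i(x)-\bar A(x))^2 \;\le\; \tfrac{1}{n-l+1}\sum_i S_l^{(i)}(x)^2.
\]
The original uniform reduction principle gives $\sup_x|S_l^{(1)}(x)|\to 0$ in probability; combined with $L^2$-boundedness of the partial-sum processes $d_l^{-1}\sum_j(1_{\{Y_{i+j}\le x\}}-F(x))$ and $d_l^{-1}\sum_j H_m(X_{i+j})$ uniformly in $x$, dominated convergence upgrades this to $E\sup_x S_l^{(1)}(x)^2\to 0$. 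Fubini and stationarity then give $E\sup_x(n-l+1)^{-1}\sum_i S_l^{(i)}(x)^2\le E\sup_x S_l^{(1)}(x)^2\to 0$, so that $\sup_x E^{\ast}[S_{n,l}^{\ast}(x)^2]\to 0$ in probability, and Markov's inequality under $P^{\ast}$ gives pointwise convergence $S_{n,l}^{\ast}(x)\to 0$ in $P^{\ast}$-probability, in probability, at every fixed $x$.

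The main obstacle is promoting this to uniformity in $x$. I would discretize: assuming continuity of $F$, pick a grid $-\infty=x_0<x_1<\ldots<x_K=\infty$ with $F(x_k)-F(x_{k-1})\le 1/K$ and split
\[
T_k^{(l)}(x)=M_k(x)-\tfrac{J_m(x)}{m!}\,N_k,
\]
where $M_k(x):=\sum_j 1_{\{Y_{I_k+j}\le x\}}-l\tilde F_{n,l}(x)$ is a difference of two monotone functions of $x$ and $N_k:=\sum_j H_m(X_{I_k+j})-l\tilde\mu_{n,l}(H_m)$ does not depend on $x$. On $[x_{k-1},x_k]$ the oscillation of $M_k$ is then controlled by the two monotone grid increments, and the oscillation of the Hermite piece is at most $\omega_{J_m}(1/K)\cdot\bigl|(d_l\sqrt p)^{-1}\sum_k N_k\bigr|$, whose last factor is $O_{P^{\ast}}(1)$ by Theorem~B applied to $H_m$. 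A union bound over the $K+1$ grid points together with the Step~2 variance estimate gives $\max_j|S_{n,l}^{\ast}(x_j)|=o_{P^{\ast}}(1)$, while uniform continuity of $J_m$ (which follows from continuity of $F$ via Cauchy--Schwarz on the defining integral) makes the Hermite oscillation vanish as $K\to\infty$. The delicate point is choosing $K=K(n)\to\infty$ so that the grid maximum, the indicator oscillation (of $P^{\ast}$-mean order $\sqrt p\,l/(K d_l)$) and the Hermite oscillation are simultaneously $o_{P^{\ast}}(1)$; the assumption $l=O(n^{1-\epsilon})$ with $p^{-1}+l^{-1}=o(1)$ supplies the necessary slack between $K$, $l$ and $p$, and turning this into a quantitative argument is where the main technical work lies.
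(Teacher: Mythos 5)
Your setup is sound and matches the paper's starting point: rewriting $S_{n,l}^{\ast}$ as a normalized sum of $p$ conditionally iid block contributions, bounding $E^{\ast}[S_{n,l}^{\ast}(x)^2]$ by the empirical average of the squared Dehling--Taqqu reduction processes $S_{l,i}(x)$ over all blocks (your centering-at-$lF(x)$ trick is in fact slightly cleaner than the paper's three-term Cauchy--Schwarz split), and separating the $x$-free Hermite part, which is $O_{P^{\ast}}(1)$ by Lahiri's Theorem~B, from the monotone indicator part. However, the step where you pass from pointwise to uniform control has a genuine gap that a single-scale grid cannot close. Your union bound over the $K+1$ grid points costs $(K+1)\,\epsilon^{-2}\sup_x E^{\ast}[S_{n,l}^{\ast}(x)^2]$, and the best available rate for the variance at a \emph{single} point is $O_P(l^{-\gamma})$ (from Lemma~\ref{PreRot}, with $\gamma$ a small constant). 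Meanwhile your own oscillation estimate forces $K\gg \sqrt{p}\,l/d_l \sim \sqrt{p}\,l^{mD/2}$, which in general grows much faster than $l^{\gamma}$; the two requirements on $K$ are incompatible, so the "quantitative argument" you defer is not merely technical work but the place where the method breaks. The paper's proof resolves exactly this tension by dyadic chaining: it uses the \emph{increment} bound $E\lvert S_l(x,y)\rvert^2\le C l^{-\gamma}F(x,y)$, so that at refinement level $k$ the $2^{k+1}$ cells each contribute a variance proportional to their $\Lambda$-measure $2^{-(k+1)}$, making the total cost $l^{-\gamma}(K+4)^5$ with $K$ only logarithmic in $\sqrt{p}\,l/d_l$. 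Without a multi-scale argument (or an explicit maximal inequality in $x$), your discretization does not yield the lemma.

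Two smaller points. First, the claim that dominated convergence upgrades $\sup_x\lvert S_l^{(1)}(x)\rvert\to 0$ in probability to $E\sup_x S_l^{(1)}(x)^2\to 0$ is unjustified: the only available pointwise dominating quantity is of order $d_l^{-1}(l+\sum_j\lvert H_m(X_j)\rvert)$, whose second moment diverges, so there is no integrable dominating function and uniform integrability of the supremum would have to be proved separately (the paper never needs this statement, precisely because it only ever takes expectations of pointwise increments). Second, you assume continuity of $F$ to build the grid; the paper makes no such assumption and instead partitions according to the monotone function $\Lambda(x)=F(x)+\int_{\{G(s)\le x\}}\lvert H_m(s)\rvert/m!\,\phi(s)\,ds$, using left limits $x_{i+1}(K)-$ to handle possible atoms, which simultaneously controls both $F(x,y)$ and $J_m(x,y)$ on each cell.
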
  

\section{Preliminary results}

Introduce some notation:
\begin{align*}
& S_n(x,y) = S_n(y) - S_n(x),  \ \ \ \  F(x,y) = F(y) -F(x) \\
& \tilde F_{n,l}(x,y) = \tilde F_{n,l}(y) - \tilde F_{n,l}(x),  \ \ \ \ J_m(x,y) = J_m(y) -J_m(x).
\end{align*}

\begin{Lem} [Dehling, Taqqu] \label{PreRot}
There exists constants $\gamma >0$ and $C >0$ such that for all $n \in \mathds N$
\begin{align*}
E \lvert S_n(x,y) \rvert ^2 \leq C n^{-\gamma} (F(y) -F(x)).
\end{align*}
\end{Lem}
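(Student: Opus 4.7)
The plan is to exploit the Hermite expansion of the summand
\begin{align*}
\phi_{x,y}(X_i) := 1_{\{x < Y_i \leq y\}} - (F(y)-F(x)) - \frac{J_m(y)-J_m(x)}{m!}\, H_m(X_i),
\end{align*}
so that $S_n(x,y) = d_n^{-1}\sum_{i=1}^n \phi_{x,y}(X_i)$. Since $X_1$ is standard Gaussian, I would write $\phi_{x,y}(X_i) = \sum_{q\geq 0} (c_q(x,y)/q!)\, H_q(X_i)$ with Hermite coefficients $c_q(x,y) = E[\phi_{x,y}(X_1) H_q(X_1)]$.

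The decisive observation is that $c_q(x,y) = 0$ for every $0 \leq q \leq m$: the constant $c_0$ vanishes by centering; $c_1,\dots,c_{m-1}$ vanish by the hypothesis that the class $\{1_{\{G(\cdot)\leq x\}} - F(x)\}$ has Hermite rank $m$; and $c_m$ vanishes by direct computation, since $E[1_{\{x<Y_1\leq y\}} H_m(X_1)] = J_m(y) - J_m(x)$ while $E[H_m(X_1)^2] = m!$, so the subtracted term is engineered precisely to cancel the $H_m$ projection.

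Next I would apply the orthogonality relation $E[H_p(X_0) H_q(X_k)] = \delta_{pq}\, q!\, \rho(k)^q$ for the jointly Gaussian pair to get
\begin{align*}
E |S_n(x,y)|^2 = \frac{1}{d_n^2} \sum_{q \geq m+1} \frac{c_q^2(x,y)}{q!} \sum_{i,j=1}^n \rho(|i-j|)^q.
\end{align*}
Because $0 \leq \rho(k) \leq 1$, one has $\rho(k)^q \leq \rho(k)^{m+1}$ for all $q \geq m+1$, which factors the $q$-dependence out of the double sum:
\begin{align*}
E|S_n(x,y)|^2 \leq \frac{1}{d_n^2}\Bigl(\sum_{i,j=1}^n \rho(|i-j|)^{m+1}\Bigr) \sum_{q\geq m+1}\frac{c_q^2(x,y)}{q!}.
\end{align*}
By Parseval, $\sum_q c_q^2(x,y)/q! = E[\phi_{x,y}(X_1)^2]$, and a short direct computation gives $E[\phi_{x,y}(X_1)^2] = (F(y)-F(x))(1-(F(y)-F(x))) - (J_m(y)-J_m(x))^2/m! \leq F(y)-F(x)$, producing exactly the linear dependence on the increment required by the statement.

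The remaining step is to verify $d_n^{-2}\sum_{i,j=1}^n \rho(|i-j|)^{m+1} = O(n^{-\gamma})$ for some $\gamma > 0$. Recalling $d_n^2 \sim n^{2-mD}L^m(n)$ and $\rho(k) \sim k^{-D}L(k)$, Karamata-type estimates for sums of regularly varying sequences give: when $(m+1)D > 1$ the double sum is $O(n)$, yielding rate $n^{-(1-mD)}$; when $(m+1)D < 1$ it is of order $n^{2-(m+1)D}L^{m+1}(n)$, yielding rate $n^{-D}L(n)$; the boundary case $(m+1)D = 1$ only introduces a logarithmic correction. Since $mD < 1$ by assumption, each regime yields polynomial decay, and any $\gamma$ strictly smaller than $\min(D, 1-mD)$ works after absorbing slowly varying factors. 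The main subtle point of the whole argument is keeping the bound simultaneously uniform in $(x,y)$ and proportional to $F(y)-F(x)$; both fall out of the Parseval estimate once the $k$-dependence has been separated from the $q$-dependence via the monotonicity trick.
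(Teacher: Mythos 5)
Your proposal is correct, and it is essentially the standard argument of Dehling and Taqqu (Hermite expansion of the centered, $H_m$-corrected indicator increment, vanishing of the first $m$ coefficients, orthogonality plus the bound $|\rho(k)|^q\leq|\rho(k)|^{m+1}$ to decouple $q$ from the double sum, Parseval to recover the factor $F(y)-F(x)$, and Karamata estimates for the rate); the paper itself states this lemma without proof, citing that source. The only cosmetic point is that you should write $|\rho(k)|$ rather than assume $\rho(k)\geq 0$ when majorizing $\rho(k)^q$, which changes nothing in the argument.
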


The next result is Lemma 3.1. of Lahiri \cite{Lah}.

\begin{Lem} [Lahiri] \label{PreBlau}
Define $\tilde \mu_{n.l}(H_m)$ as in (\ref{DefinitionAltMeanAltECDF}). 
If the conditions of Theorem \ref{ConvergenceEmpProBoots} hold
\begin{align*} 
\text{(i)} \ \tilde \mu_{n,l}(H_m)  =o_P(d_l/l) \ \ \ \ \text{and} \ \ \ \ \text{(ii)} \ E [\mu_{n,l}(H_m)]^2 \leq C d_n^2/n^2.
\end{align*}
\end{Lem}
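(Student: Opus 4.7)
The statement is Lahiri's Lemma~3.1 \cite{Lah}; the plan is to prove (ii) by a direct variance computation and to deduce (i) from it by Markov's inequality.

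I would first rewrite $\tilde\mu_{n,l}(H_m)$ as a weighted average of the $H_m(X_j)$. Since $(X_1^{\ast},\dots,X_l^{\ast})$ is drawn uniformly from the $n-l+1$ blocks of length $l$ of the original sample,
\begin{align*}
\tilde\mu_{n,l}(H_m) = \frac{1}{l(n-l+1)}\sum_{i=1}^{n-l+1}\sum_{j=i}^{i+l-1}H_m(X_j) = \frac{1}{l(n-l+1)}\sum_{j=1}^{n} w_j H_m(X_j),
\end{align*}
where $w_j = \#\{i\in\{1,\dots,n-l+1\}: i\le j\le i+l-1\}\in [0,l]$ (and $w_j=l$ for $l\le j\le n-l+1$). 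The Hermite-orthogonality identity $E[H_m(X_j)H_m(X_k)] = m!\,\rho(|j-k|)^m$, which holds for any pair of standard Gaussians with correlation $\rho(|j-k|)$, together with the trivial bound $w_j\le l$ then yields
\begin{align*}
E[\tilde\mu_{n,l}(H_m)^2] = \frac{m!}{l^2(n-l+1)^2}\sum_{j,k=1}^n w_j w_k \rho(|j-k|)^m \le \frac{m!}{(n-l+1)^2}\sum_{j,k=1}^n \rho(|j-k|)^m.
\end{align*}
The classical variance asymptotic for the Hermite partial sum (Taqqu, Dobrushin-Major), applicable because $\rho$ is regularly varying and $mD<1$, gives $\sum_{j,k=1}^n \rho(|j-k|)^m = O(d_n^2)$; since $n-l+1\sim n$, this proves (ii).

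Part (i) then follows from (ii) by Markov's inequality:
\begin{align*}
P\bigl(|\tilde\mu_{n,l}(H_m)|>\epsilon\,d_l/l\bigr) \le \frac{l^2\,E[\tilde\mu_{n,l}(H_m)^2]}{\epsilon^2\, d_l^2} \le \frac{C\,l^2\, d_n^2}{\epsilon^2\, n^2\, d_l^2},
\end{align*}
which, using $d_n\sim n^H L^{m/2}(n)$ and $d_l\sim l^H L^{m/2}(l)$, is of order $(l/n)^{2(1-H)}$ up to slowly varying factors, hence tends to $0$ because $H<1$ and $l=O(n^{1-\epsilon})$. The only non-elementary ingredient is the Hermite-partial-sum variance asymptotic, which is an off-the-shelf tool in the LRD literature; modulo that, the proof is a short calculation.
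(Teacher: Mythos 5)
The paper does not actually prove this lemma: it imports it verbatim as Lemma~3.1 of Lahiri \cite{Lah}, so there is no in-paper argument to compare against. Your proof is correct and is essentially the canonical one; indeed it is the exact analogue of the computation the paper itself carries out for the indicator-function version in Lemma~\ref{PreGelb}. There, too, $\tilde F_{n,l}$ is rewritten as $\frac{1}{l(n-l+1)}\sum_{j=1}^n a_{n,j}(\cdot)$ with the same triangular weights $a_{n,j}\le l$ that you call $w_j$, the Hermite orthogonality $E[H_q(X_i)H_q(X_j)]=q!\,\rho(i-j)^q$ is applied, and the conclusion is drawn from $d_n^2\sim\sum_{i,j\le n}\lvert\rho(i-j)\rvert^m$ — the same Taqqu/Dobrushin--Major variance asymptotic you invoke. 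Your deduction of (i) from (ii) via Chebyshev is also sound: the bound $l^2d_n^2/(n^2d_l^2)$ is of order $(l/n)^{mD}$ up to slowly varying factors, and Potter's bounds absorb the ratio $L^m(n)/L^m(l)$ into $(n/l)^{m\delta}$ with $\delta<D$, so the whole expression vanishes since $l=O(n^{1-\epsilon})$. One cosmetic remark: the lemma's statement writes $E[\mu_{n,l}(H_m)]^2$, which is ambiguous between the squared mean and the second moment; your reading as the second moment $E[\tilde\mu_{n,l}(H_m)^2]$ is the one consistent with how the bound is used later in the proof of Lemma~\ref{WeakRedPrinBoots}, and is the correct one.
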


The next lemma extends the previous one to indicator functions.

\begin{Lem} \label{PreGelb}
Define $\tilde F_{n,l}(x)$ as in (\ref{DefinitionAltMeanAltECDF}). If the conditions of Theorem \ref{ConvergenceEmpProBoots} hold
\begin{align*}
E \left ( F(x,y) - \tilde F_{n,l}(x,y) \right )^2 \leq C d_n^2/n^2 F(x,y).
\end{align*}
\end{Lem}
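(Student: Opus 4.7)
The plan is to establish a reduction-principle style decomposition for $\tilde F_{n,l}(x,y) - F(x,y)$, isolating a main term proportional to $J_m(x,y)\tilde\mu_{n,l}(H_m)$ (which is controlled by Lemma \ref{PreBlau}) and a remainder that inherits the smallness of the reduction residuals from Lemma \ref{PreRot}. First I would note that by stationarity $E[\tilde F_{n,l}(x,y)] = F(x,y)$, so the left-hand side of the claim is actually the variance of $\tilde F_{n,l}(x,y)$.

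The next step is to perform the reduction block by block. For each starting index $i \in \{1, \dots, n-l+1\}$, the block sum $\sum_{j=0}^{l-1}(1_{\{x<Y_{i+j}\leq y\}} - F(x,y))$ can be written as $d_l S_l^{(i)}(x,y) + (J_m(x,y)/m!)\sum_{j=0}^{l-1} H_m(X_{i+j})$, where $S_l^{(i)}(x,y)$ denotes the quantity from (\ref{ApproxEmpProRot}) computed on the shifted window $(Y_i,\dots,Y_{i+l-1})$. Averaging over $i$ and recognising $\tilde\mu_{n,l}(H_m)$ in the second summand gives
\begin{align*}
\tilde F_{n,l}(x,y) - F(x,y) = \frac{d_l}{l(n-l+1)}\sum_{i=1}^{n-l+1} S_l^{(i)}(x,y) + \frac{J_m(x,y)}{m!}\tilde\mu_{n,l}(H_m),
\end{align*}
so by $(a+b)^2 \leq 2a^2 + 2b^2$ it suffices to bound each summand separately.

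For the $\tilde\mu_{n,l}(H_m)$-term I would apply Cauchy--Schwarz to $J_m(x,y) = E[1_{\{x<G(X_1)\leq y\}} H_m(X_1)]$, together with $E[H_m(X_1)^2] = m!$, to get $J_m(x,y)^2 \leq m!\,F(x,y)$, and then multiply by the estimate $E[\tilde\mu_{n,l}(H_m)^2] \leq C d_n^2/n^2$ from Lemma \ref{PreBlau}(ii). This produces a contribution of the desired order $(d_n^2/n^2) F(x,y)$.

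The hard part will be the first summand, since the shifted copies $S_l^{(i)}(x,y)$ are strongly overlapping and a direct Cauchy--Schwarz over $i$ would be far too wasteful. The idea I would use is a telescoping identity: setting $r_k(x,y) := 1_{\{x<Y_k\leq y\}} - F(x,y) - (J_m(x,y)/m!)H_m(X_k)$ and $T_k := \sum_{j=1}^k r_j(x,y)$, a change of summation order yields $\sum_{i=1}^{n-l+1} d_l S_l^{(i)}(x,y) = \sum_{k=n-l+1}^n T_k - \sum_{k=1}^{l-1} T_k$. Since $T_k = d_k S_k(x,y)$, Lemma \ref{PreRot} gives $E[T_k^2] \leq C d_k^2 k^{-\gamma} F(x,y)$. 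Bounding each of the two short sums of length $l$ by Cauchy--Schwarz in $k$ then produces $E|\sum_i S_l^{(i)}(x,y)|^2 \leq C l^2 d_l^{-2} d_n^2 n^{-\gamma} F(x,y) + (\text{similar head term})$, and after multiplication by the prefactor $d_l^2/(l^2(n-l+1)^2)$ both pieces are of order at most $(d_n^2/n^2) F(x,y)$ (the tail contribution even carries an extra factor $n^{-\gamma}$, and the head contribution is controlled using $d_l \leq C d_n$ and $l \geq 1$). Combining with the bound on the second summand completes the estimate.
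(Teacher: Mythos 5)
Your argument is correct, but it is a genuinely different route from the one in the paper. The paper proves Lemma \ref{PreGelb} by a direct second-moment computation: it writes $F(x,y)-\tilde F_{n,l}(x,y)$ as a weighted sum $\frac{1}{l(n-l+1)}\sum_j a_{n,j}(F(x,y)-1_{\{x<Y_j\leq y\}})$ with weights $a_{n,j}\leq l$, expands the indicator in its full Hermite series, and uses the orthogonality relations $E[H_q(X_i)H_p(X_j)]=\delta_{qp}\,q!\,\rho(i-j)^q$ together with $|\rho(i-j)|^q\leq|\rho(i-j)|^m$ and $\sum_{q\geq m}J_q^2(x,y)/q!\leq F(x,y)$, so that the bound drops out of $d_n^2\sim\sum_{i,j\leq n}|\rho(i-j)|^m$ with no appeal to Lemmas \ref{PreRot} or \ref{PreBlau}. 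You instead split off only the $m$-th chaos, reducing the claim to the reduction residuals of (\ref{ApproxEmpProRot}) via the telescoping identity $\sum_i d_lS_l^{(i)}=\sum_{k=n-l+1}^nT_k-\sum_{k=1}^{l-1}T_k$ (valid once $2l\leq n$, which holds eventually since $l=O(n^{1-\epsilon})$), and to Lemma \ref{PreBlau}(ii) for the $H_m$-part after the Cauchy--Schwarz bound $J_m^2(x,y)\leq m!\,F(x,y)$. Your version is more modular — it would survive in any setting where a reduction principle of the type of Lemma \ref{PreRot} and a second-moment bound on $\tilde\mu_{n,l}(H_m)$ are available, without an explicit chaos expansion — at the cost of a couple of routine regular-variation estimates you only sketch (namely $\sum_{k\leq l}d_k^2k^{-\gamma}\leq Cl\,d_l^2$ for the head term and $d_l\leq Cd_n$), whereas the paper's computation is shorter, self-contained, and keeps track of the full sequence of coefficients $J_q$ rather than just $J_m$. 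Both yield the stated bound $Cd_n^2/n^2\,F(x,y)$.
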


\begin{Bew}
Since the Hermite rank equals $m$ we obtain the following expansion
\begin{align*}
1_{\{x < Y_j \leq y\}} - F(x,y) = \sum_{q=m}^{\infty} J_q(x,y)/q! H_q(X_i).
\end{align*}
By definition of $\tilde F_{n,l}(x)$ we have
\begin{align*}
F(x) - \tilde F_{n,l}(x) = & \ F(x) -\frac 1 l \frac 1 {(n-l+1)} \sum_{j=1}^n a_{n,j}1_{\{Y_j\leq x \}} \\
= & \  \frac 1 l \frac 1 {(n-l+1)} \sum_{j=1}^n a_{n,j}(F(x) - 1_{\{Y_j\leq x \}} ),
\end{align*}
where
\begin{align*}
a_{n,j} = \begin{cases} 
                             j,  & \text{if } j < l  ,\\
                          l, & \text{if } l \leq j \leq n-l +1, \\
                          n -j +1 & \text{if } j > n-l+1.
                                                      \end{cases}
\end{align*}
Note that $a_{n,j} \leq l$ for all $j$. By orthogonality of the $H_q(X_i)$,
\begin{align*}
\sum_{q=m}^{\infty} J_q^2(x,y)/q! \leq F(x,y)
\end{align*}
and moreover
\begin{align*}
E \left ( F(x,y) - \tilde F_{n,l}(x,y) \right )^2 = & \ \frac 1 {l^2} \frac 1 {(n-l+1)^2} \sum_{q=m}^{\infty} \frac{J_q^2(x,y)} {q!} \frac 1 {q!} \sum_{i,j \leq n} a_{n,i}a_{n,j} E[H_q(X_i)H_q(X_j)] \\
\leq & \ \frac 1 {(n-l+1)^2} F(x,y) \sum_{i,j \leq n} \lvert r(i-j) \rvert^m.
\end{align*}
The conclusion follows because $d_n^2 \sim \sum_{i,j \leq n} \lvert r(i-j) \rvert^m$.
\end{Bew}

\section{Proof of the main result}

\begin{proof} [Proof of Lemma \ref{WeakRedPrinBoots}.]
We will proof the result by using exactly the same chaining points as in Dehling and Taqqu \cite{DeTa}. Define
\begin{align*}
\Lambda (x) := F (x)+ \int_{\{G(s)\leq x\}} \frac {\lvert H_{m}(s) \rvert} {m!} \phi(s) \ ds.
\end{align*} 
The function $\Lambda$ is monotone, $\Lambda(-\infty) =0$, $\Lambda(+\infty) < \infty$ and $\max\{F(x,y), J_m(x,y)/m!\} \leq \Lambda(y) - \Lambda(x)$. \\
Define for $k=0,1, \dots, K$ refining partitions of $\mathds R$,
\begin{align*}
 - \infty =x_i(k) \leq x_1(k) \leq \dots \leq x_{2^k}(k) = \infty,
 \end{align*}
 by 
 \begin{align*}
 x_i(k) = \inf \{ x \in \mathds R \ \vert \ \Lambda(x) \geq \Lambda(+ \infty) i2^{-k} \},  \ \ i =0,1, \dots , 2^k-1.
 \end{align*}
 $K$ will be chosen later. Then we have
\begin{align*}
\Lambda(x_i(k)-) - \Lambda(x_{i-1}(k)) \leq \Lambda(+\infty)2^{-k}. 
\end{align*}
Based on these partitions we can define chaining points $i_k(x)$ by
\begin{align*}
x_{i_k(x)}(k) \leq x < x_{i_k(x)+1}(k),
\end{align*}
 for each $x$ and each $k \in \{0,1, \dots , K\}$, see Dehling and Taqqu \cite{DeTa}. In this way each point $x$ is linked to $-\infty$, in detail  
\begin{align*}
- \infty = x_{i_0(x)}(0) \leq x_{i_1(x)}(1) \leq \dots \leq x_{i_K(x)}(K) \leq x.
\end{align*}
We have
\begin{align}
 \nonumber S_{n,l}^{\ast} (x) = & \ S_{n,l}^{\ast} (x_{i_0(x)}(0),x_{i_1(x)}(1)) \\
 \nonumber & \ + S_{n,l}^{\ast} (x_{i_1(x)}(1),x_{i_2(x)}(2)) \\
& \ + \cdots \label{ErsteZerlegungViolett} \\
 \nonumber  & \ + S_{n,l}^{\ast} (x_{i_{K-1}(x)}(K-1),x_{i_K(x)}(K)) \\  
 \nonumber& \ + S_{n,l}^{\ast} (x_{i_K(x)}(K),x),
\end{align}
where $S_{n,l}^{\ast} (x,y) = S_{n,l}^{\ast} (y) - S_{n,l}^{\ast} (x)$. \\
Let us first consider the last term of (\ref{ErsteZerlegungViolett}). We get
\begin{align*}
&  \ \lvert S_{n,l}^{\ast} (x_{i_K(x)}(K),x) \rvert \\
 = & \ \Bigg \lvert d_l^{-1}p^{-1/2}\sum_{j =1}^{pl} \bigg ( 1_{\{ x_{i_K(x)}(K) < Y_j^{\ast} \leq x \} } - \tilde F_{n,l}(x_{i_K(x)}(K) ,x)  \\
& \ - \frac 1 {m!} J_{m}(x_{i_K(x)}(K),x) (H_{m}(X_j^{\ast})-\tilde \mu_{n,l}(H_m)) \bigg )\Bigg \rvert \\
\leq & \ d_l^{-1}p^{-1/2} \sum_{j =1}^{pl} \left ( 1_{\{ x_{i_K(x)}(K) < Y_j^{\ast} \leq x \} } + \tilde F_{n,l}(x_{i_K(x)}(K) ,x) \right) \\
 & \ + \left \lvert \frac 1 {(m)!} J_{m}(x_{i_K(x)}(K),x) d_l^{-1}p^{-1/2} \sum_{j =1}^{pl} (H_{m}(X_j^{\ast})-\tilde \mu_{n,l}(H_m))  \right \rvert \\
\leq & \ \left \lvert S_{n,l}^{\ast} (x_{i_{K}(x)}(K),x_{i_K(x)+1}(K)-) \right \rvert \\
 & \ + 2pl d_l^{-1}p^{-1/2} \tilde F_{n,l}(x_{i_{K}(x)}(K),x_{i_K(x)+1}(K)-)  \\
& \ + 2 \Lambda(+\infty) 2^{-K} d_l^{-1}p^{-1/2} \left \lvert \sum_{j =1}^{pl}( H_{m}(X_j^{\ast})-\tilde \mu_{n,l}(H_m) )\right \rvert \\
\leq & \ \left \lvert S_{n,l}^{\ast} (l;x_{i_{K}(x)}(K),x_{i_K(x)+1}(K)-) \right \rvert \\
 & \ + 2pl d_l^{-1}p^{-1/2} \left ( \tilde F_{n,l}(x_{i_{K}(x)}(K),x_{i_K(x)+1}(K)-)  - F(x_{i_{K}(x)}(K),x_{i_K(x)+1}(K)-) \right ) \\
 & \ + 2pl d_l^{-1}p^{-1/2}  F(x_{i_{K}(x)}(K),x_{i_K(x)+1}(K)-)  \\
 & \ +2 \Lambda(+\infty) 2^{-K} d_l^{-1}p^{-1/2} \left \lvert \sum_{j =1}^{pl} ( H_{m}(X_j^{\ast})-\tilde \mu_{n,l}(H_m)) \right \rvert.
\end{align*}
Note that $\sum_{k=0}^{\infty} \epsilon / (k+3)^2 \leq \epsilon /2$.  Making further use of the estimate above and the decomposition (\ref{ErsteZerlegungViolett}) we get
\begin{align}
 \nonumber & \ P^{\ast} \left ( \sup_{x} \lvert S_{n,l}^{\ast}(x) \rvert > \epsilon \right ) \\
 \nonumber \leq & \ P^{\ast} \left ( \sup_{x} \lvert S_{n,l}^{\ast}(x) \rvert > \epsilon \sum_{k=0}^K(k+3)^{-2} +\epsilon/2 \right ) \\
 \nonumber \leq & \ P^{\ast} \left ( \max_{x} \lvert S_{n,l}^{\ast}(x_{i_0(x)}(0),x_{i_1(x)}(1)) \rvert > \epsilon /9 \right ) \\
 \nonumber   & \ + P^{\ast} \left ( \max_{x} \lvert S_{n,l}^{\ast}(x_{i_1(x)}(1),x_{i_2(x)}(2)) \rvert > \epsilon /16 \right ) \\
 & \ + \cdots \label{DieLetzteWahrscheinlihckeit} \\
 \nonumber  & \ + P^{\ast} \left ( \max_{x} \lvert S_{n,l}^{\ast}(x_{i_{K}(x)}(K),x_{i_K(x)+1}(K)-) \rvert > \epsilon /(K+3)^2 \right )\\
 \nonumber  & \ + P^{\ast} \left ( \max_{x} 2 pl d_l^{-1}p^{-1/2} \left \lvert \tilde F_{n,l}(x_{i_{K}(x)}(K),x_{i_K(x)+1}(K)-)  - F(x_{i_{K}(x)}(K),x_{i_K(x)+1}(K)-) \right \rvert  > \epsilon/(K+4)^2 \right ) \\ 
 \nonumber  & \ + P^{\ast} \left ( 2 \Lambda(+\infty) 2^{-K} d_l^{-1}p^{-1/2} \left \lvert \sum_{j =1}^{pl} ( H_{m}(X_j^{\ast})-  E^{\ast} [H_{m}(X_j^{\ast})])  \right \rvert > (\epsilon/2) - 2 \Lambda(+\infty) pl d_{l}^{-1} p^{-1/2} 2^{-K} \right ).
\end{align}
By the Markov inequality we get
\begin{align}
\nonumber & \ P^{\ast}  \left ( \max_{x} \lvert S_{n,l}^{\ast}(x_{i_k(x)}(k),x_{i_{k+1}(x)}(k+1)) \rvert > \epsilon /(k+3)^2 \right ) \\
\nonumber \leq & \ \sum_{i=0}^{2^{k+1}-1} P^{\ast} \left ( S_{n,l}^{\ast}(x_i(k+1),x_{i+1}(k+1))  > \epsilon /(k+3)^2 \right ) \\
\leq & \ \sum_{i=0}^{2^{k+1}-1} E^{\ast} \left [ S_{n,l}^{\ast} (x_i(k+1),x_{i+1}(k+1)) \right ]^2 \frac {(k+3)^4}{\epsilon^2}. \label{EstimateMarkovInequality}
\end{align}
By construction of the bootstrap sample we get
\begin{align*}
& \ E^{\ast} [ S_{n,l}^{\ast} (x) ]^2 \\
= & \ \frac 1 {d_l^{2}p} E^{\ast} \left [ \sum_{j=1}^{kl} (1_{\{Y_j^{\ast} \leq x \}}  - \tilde F_{n,l}(x) - J_m(x)/m! (H_m(X_j^{\ast}) - \tilde \mu_{n,l}(H_m)) \right ]^2 \\
= & \ \frac 1 {d_l^{2}} E^{\ast} \left [ \sum_{j \in B_1} (1_{\{Y_j^{\ast} \leq x \}}  - \tilde F_{n,l}(x) - J_m(x)/m! (H_m(X_j^{\ast}) - \tilde \mu_{n,l}(H_m)) ) \right ]^2 \\
= & \ \frac 1 {d_l^{2}} \frac 1 {(n-l+1)} \sum_{i=1}^{n-l+1} \left ( \sum_{j \in B_i} (1_{\{Y_j \leq x \}}  - \tilde F_{n,l}(x) - J_m(x)/m! (H_m(X_j) - \tilde \mu_{n,l}(H_m)) ) \right )^2 \\
\leq & \ \frac 1 {d_l^{2}} \frac 1 {(n-l+1)} C \sum_{i=1}^{n-l+1} \left ( \sum_{j \in B_i} (1_{\{Y_j \leq x \}}  - F(x) - J_m(x)/m! H_m(X_j) )\right )^2 \\
& \ + \frac 1 {d_l^{2}} C l^2 \left ( F(x) - \tilde F_{n,l}(x) \right )^2 \\
& \ + \frac 1 {d_l^{2}} C J_m^2(x)/(m!)^2 l^2 \left (\tilde \mu_{n,l}(H_m)\right )^2 \\
= & \  \frac 1 {(n-l+1)} C \sum_{i=1}^{n-l+1} S_{l,i}^2(x) \\
& \ + \frac 1 {d_l^{2}} C l^2 \left ( F(x) - \tilde F_{n,l}(x) \right )^2 \\
& \ + \frac 1 {d_l^{2}} C J_m^2(x)/(m!)^2 l^2 \left ( \tilde \mu_{n,l}(H_m)\right )^2,
\end{align*}
where 
\begin{align*}
S_{l,i}(x) = \frac 1 {d_l} \sum_{j \in B_i} (1_{\{Y_j \leq x \}}  - F(x) - J_m(x)/m! H_m(X_j) ).
\end{align*}
Consequently 
\begin{align}
\nonumber & \ E^{\ast} [ S_{n,l}^{\ast} (x,y) ]^2 \\
\nonumber \leq & \  \frac 1 {(n-l+1)} C \sum_{i=1}^{n-l+1} S_{l,i}^2(x,y) \\
& \ + \frac 1 {d_l^{2}} C l^2 \left ( F(x,y) - \tilde F_{n,l}(x,y) \right )^2 \label{EstimateBootstrapProcedure} \\
\nonumber & \ + \frac 1 {d_l^{2}} C J_m^2(x,y)/(m!)^2 l^2 \left ( \tilde \mu_{n,l}(H_m)\right )^2,
\end{align}

It is our goal to show that $E[P^{\ast}(\sup_{x \in \mathds R} \lvert S_{n,l}^{\ast} (x) \rvert > \epsilon)] \rightarrow 0$ as $n \to \infty$. To this end we take the expectation of every summand of the right-hand side of (\ref{DieLetzteWahrscheinlihckeit}). Making then successive use of the estimates (\ref{EstimateMarkovInequality}) and (\ref{EstimateBootstrapProcedure}) we obtain
\begin{align*}
& \ E \left [  P^{\ast}  \left ( \max_{x} \lvert S_{n,l}^{\ast}(x_{i_k(x)}(k),x_{i_{k+1}(x)}(k+1)) \rvert > \epsilon /(k+3)^2 \right )  \right ] \\
= & \ C \sum_{i=0}^{2^{k+1}-1} E [S_{l}^2(x_i(k+1),x_{i+1}(k+1))] \frac {(k+3)^4}{\epsilon^2} \\
& \ + C \sum_{i=0}^{2^{k+1}-1} \frac {l^2} {d_l^2} E \left ( F(x_i(k+1),x_{i+1}(k+1)) - \tilde F_{n,l}(x_i(k+1),x_{i+1}(k+1)) \right )^2 \frac {(k+3)^4}{\epsilon^2} \\ 
& \ + C \sum_{i=0}^{2^{k+1}-1} \frac {J_m^2(x_i(k+1),x_{i+1}(k+1))} {(m!)^2} \frac 1 {d_l^2} l^2 E \left ( \tilde \mu_{n,l}(H_m) \right )^2 \frac {(k+3)^4}{\epsilon^2} \\
\leq & \ C \sum_{i=0}^{2^{k+1}-1} l^{-\gamma} F(x_i(k+1),x_{i+1}(k+1)) \frac {(k+3)^4}{\epsilon^2} \\
& \ + C \sum_{i=0}^{2^{k+1}-1} \frac {l^2} {d_l^2} \frac {d_n^2}{n^2} F(x_i(k+1),x_{i+1}(k+1)) \frac {(k+3)^4}{\epsilon^2} \\
& \ + C \sum_{i=0}^{2^{k+1}-1} \Lambda(x_i(k+1),x_{i+1}(k+1))^2 \frac 1 {d_l^2} l^2 E \left ( \tilde \mu_{n,l}(H_m) \right )^2 \frac {(k+3)^4}{\epsilon^2}.
\end{align*}
We have also used Lemma \ref{PreGelb} and 
\begin{align*}
E \lvert S_{l,i}(y)-S_{l,i}(x)  \rvert ^2 \leq C l^{-\gamma} (F(y) -F(x))
\end{align*}
which is implied by Lemma \ref{PreRot}. Note that
$(l/n)^2(d_n/d_l)^2 \leq C l^{\lambda}$ for some $\lambda >0$ and $\Lambda(x_i(k+1),x_{i+1}(k+1))^2 \leq C 2^{-2(k+1)}$. Thus setting $\eta = \min \{ \gamma, \lambda \}$ yields
\begin{align*}
& \ E \left [  P^{\ast}  \left ( \max_{x} \lvert S_{n,l}^{\ast}(x_{i_k(x)}(k),x_{i_{k+1}(x)}(k+1)) \rvert > \epsilon /(k+3)^2 \right )  \right ] \\
= & \ C \left ( l^{-\eta} (k+3)^4 \epsilon^{-2} +  2^{-(k+1)} l^2/d_l^2 E[\tilde \mu_{n,l}(H_m)]^2 \right ).
\end{align*}
In the same way we get
\begin{align*}
E \left [  P^{\ast} \left ( \max_{x} \lvert S_{n,l}^{\ast}(x_{i_{K}(x)}(K),x_{i_K(x)+1}(K)-) \rvert > \epsilon /(K+3)^2 \right ) \right ] \\
\leq C l^{-\eta} (K+3)^4 \epsilon^{-2} + C 2^{-K} l^2/d_l^2 E[\tilde \mu_{n,l}(H_m)]^2
\end{align*}
and 
\begin{align*}
& \ E[ P^{\ast} \left ( \max_{x} 2 pl d_l^{-1}p^{-1/2} \left \lvert \tilde F_{n,l}(x_{i_{K}(x)}(K),x_{i_K(x)+1}(K)-)  - F(x_{i_{K}(x)}(K),x_{i_K(x)+1}(K)-) \right \rvert  > \epsilon/(K+4)^2 \right ) \\
\leq & \ \sum_{i=0}^{2^{K-1}} 2 \frac{pl}{d_l p^{1/2}} \frac {(K+4)^4}{\epsilon^2} E  \left ( F(x_i(K),x_{i+1}(K)-) - \tilde F_{n,l}(x_i(K),x_{i+1}(K)-) \right )^2 \\
\leq & \ C l^{-\eta} \frac {(K+4)^4} {\epsilon^{2}}.
\end{align*}
Choose now 
\begin{align*}
K = \left [ \log_2 \left ( \frac {8 \Lambda(+ \infty)}{\epsilon} l d_l^{-1} p^{1/2} \right ) \right ] +1,
\end{align*}
hence $ 2 \Lambda(+\infty)pl  d_{l}^{-1} p^{-1/2} 2^{-K} \leq \epsilon/4$. It remains to treat the last probability in (\ref{DieLetzteWahrscheinlihckeit}). By our choice of $K$ it can be bounded by
\begin{align}
\nonumber & \ P^{\ast} \left ( d_l^{-1}p^{-1/2} \left \lvert \sum_{j =1}^{pl} ( H_{m}(X_j^{\ast})-E^{\ast} [H_{m}(X_j^{\ast})])  \right \rvert > \frac {\epsilon}{4} \frac {2^{K-1}}{\Lambda(+\infty)} \right ) \\
\leq & \ d_l^{-2} p^{-1} E^{\ast} \left [ \sum_{j =1}^{pl} ( H_{m}(X_j^{\ast})-E^{\ast} [H_{m}(X_j^{\ast})]) \right ]^2 \frac {16}{\epsilon^2} \Lambda(+\infty)^2 2^{-2K+2}. \label{LetzteAbschGrau}
\end{align}
By the proof of Theorem B (see Lahiri \cite{Lah}) we get
\begin{align*}
d_l^{-2} p^{-1} E \left (E^{\ast} \left [ \sum_{j =1}^{pl} ( H_{m}(X_j^{\ast})-E^{\ast} [H_{m}(X_j^{\ast})]) \right ]^2 \right ) \leq C. 
\end{align*}
Taking expectation in (\ref{LetzteAbschGrau}) therefore yields
\begin{align*}
E \left [ P^{\ast} \left ( d_l^{-1}p^{-1/2} \left \lvert \sum_{j =1}^{pl} ( H_{m}(X_j^{\ast})-E^{\ast} [H_{m}(X_j^{\ast})])  \right \rvert > \frac {\epsilon}{4} \frac {2^{K-1}}{\Lambda(+\infty)} \right ) \right ] \leq & \ C \frac {16}{\epsilon^2} \Lambda(+\infty)^2 2^{-2K+2} \\
\leq & \ C l^{-2}p^{-1}d_l^2.
\end{align*}
We have now found estimates for the expectation of all summands of (\ref{DieLetzteWahrscheinlihckeit}). Combining these estimates we find
\begin{align*}
E\left [ P^{\ast} ( \sup_{ x} \lvert S_{n,l}^{\ast} (x) \rvert > \epsilon ) \right] \leq & \ C l^{-\eta} \epsilon^{-2} \sum_{k=0}^{K+1} (k+3)^4 + l^2 d_l^{-2} E[\tilde \mu_{n,l}(H_m)]^2 \sum_{k=0}^K 2^{-(k+1)} + C l^{-2}p^{-1}d_l^2 \\
\leq & \ C l^{-\eta} \epsilon^{-2} (K+4)^5 + C l^2 d_l^{-2} E[\tilde \mu_{n,l}(H_m)]^2 + Cl^{-2} p ^{-1}d_l^2 \\
\leq & \ C  l^{-\eta} \epsilon^{-2} (K+4)^5 + C l^{-\eta} + C l^{2H-2}.
\end{align*}
In the last line we have used $l^2 d_l^{-2} E[\tilde \mu_{n,l}(H_m)]^2 \leq Cl^{-\lambda} \leq C l^{-\eta }$ (see Lemma \ref{PreBlau} (ii)) and $l^{-2}p^{-1}d_l^2 \leq l^{2H-2}p^{-1}L^{m/2}(l) \leq l^{-\alpha} $ for $\alpha >0$. \\
The definition of $K$ yields
\begin{align*}
(K+4)^5 \leq C \left ( \lvert \log (\epsilon^{-1}) \rvert^5 + \lvert \log(pl) \rvert^5 \right ) \leq C \epsilon^{-1} l^{\delta},
\end{align*}
 for any $\delta >0$ and a constant $C$, depending on $\delta$. Choose $\delta = \eta/2$ and $ \rho =\min \{\eta - \delta, \alpha \}$, then
 \begin{align*}
 E\left [ P^{\ast} ( \sup_{ x} \lvert S_{n,l}^{\ast} (x) \rvert > \epsilon ) \right] \leq C  l^{-\rho}(\epsilon^{-3} +1).
 \end{align*}

\end{proof}

\begin{proof}[Proof of Theorem \ref{ConvergenceEmpProBoots}.]
By Theorem B, which is the main result of Lahiri \cite{Lah}, we have
\begin{align*}
\frac 1 {d_l p^{1/2}} \sum_{i=1}^{pl} (H_m(X_i^{\ast}) - E^{\ast} [H_m(X_i^{\ast})]) \xrightarrow{\mathcal D}_{\ast} Z  \ \ \text{in probability,}
\end{align*}
where $Z$ is standard normal distributed. By the boundedness of $J_m(x)$ we get by the continuous mapping theorem
\begin{align*}
\frac 1 {d_l p^{1/2}} \frac {J_m(x)} {m!} \sum_{i=1}^{pl} (H_m(X_i^{\ast}) - E^{\ast} [H_m(X_i^{\ast})]) \xrightarrow{\mathcal D}_{\ast}  \frac {J_m(x)} {m!} Z  \ \ \text{in probability,}
\end{align*}
where the weak convergence takes place in $D[-\infty,\infty]$, equipped with the uniform topology. Together with the reduction principle (Lemma \ref{WeakRedPrinBoots}) this finishes the proof.

\end{proof}

\bibliography{Lit}
 
  \end{document}